\definecolor{darkblue}{RGB}{9,72,90}
\definecolor{lightblue}{RGB}{124,184,201}
\pgfplotsset{compat=1.16}
\newtheorem{theorem}{Theorem}
\newtheorem{proposition}{Proposition}
\newtheorem{lemma}{Lemma}
\newtheorem{definition}{Definition}
\newtheorem{remark}{Remark}
\newtheorem{assumption}{Assumption}
\title{Brownian motion on the Fubini extension space and applications}
\author{%
  Hamed Amini\thanks{University of Florida, USA. \href{mailto:aminil@ufl.edu}{aminil@ufl.edu}} \and
  Nina H.~Amini\thanks{CNRS, L2S, CentraleSupélec, Université Paris-Saclay, France.
  \href{mailto:nina.amini@centralesupelec.fr}{nina.amini@centralesupelec.fr}} \and
  Sofiane Chalal\thanks{L2S, CentraleSupélec, Université Paris-Saclay, France.
  \href{mailto:sofiane.chalal@centralesupelec.fr}{sofiane.chalal@centralesupelec.fr}} \and
  Gaoyue Guo\thanks{MICS, CentraleSupélec, Université Paris-Saclay, France.
  \href{mailto:gaoyue.guo@centralesupelec.fr}{gaoyue.guo@centralesupelec.fr}}%
}
\date{\today}
\begin{document}
\maketitle

{\begin{abstract}
We study a family of essentially pairwise independent Brownian motions indexed by a continuum of labels and show how the Fubini extension framework provides a rigorous way to represent such families as a single jointly measurable process. Within this framework, we address two main objectives: first, we show how a system of graphon stochastic differential equations can be reformulated as a single McKean-Vlasov type equation driven by a standard Brownian motion, which significantly facilitates its analysis. Second, we establish a Girsanov theorem for a continuum of essentially pairwise independent Brownian motions.

\bigskip
\noindent {\bf Keywords:}  Fubini extension; interacting particle systems;  graphon SDEs; continuum of agents

\bigskip
\noindent {\bf  Mathematics Subject Classification:} 60J65, 28A35, 60H05

\end{abstract}}

\section{Introduction}
Modeling large populations of interacting stochastic particles often requires an idealization in which the set of particles is treated as a continuum \cite{aumann64markets,touboul14propagation}.
From a practical point of view, an atomless probability index space
provides a convenient idealization for many models with a large but finite number of agents. From a technical point of view, such an idealization is often necessary to treat some models of heterogeneous interacting particle systems in the mean-field regime.

In such situations where one needs to model a continuum of agents, the usual product probability space
$(I\times\Omega,\mathcal{I}\otimes\mathcal{F},\lambda\otimes\mathbb{Q})$
cannot accommodate a jointly measurable continuum of independent random variables \cite[Proposition 1.1]{sun98almost}, \cite{judd85law}.
Thanks to the Fubini extension framework introduced by Sun \cite{sun06exact}, one can enlarge this product space to a larger one in which joint measurability holds. Moreover, this enlargement makes it possible to integrate successively with respect to $u$ and $\omega$ in any order, that is, it preserves the Fubini property.

The purpose of this note is to highlight an advantage of working directly on
the Fubini extension of the product space. In this setting, a collection of
random variables can be viewed as a single random variable defined on the
Fubini space. In particular, in the case of games with a continuum of agents \cite[Section 3.7]{carmonadelarue18I}, 
\cite{aurell22sto,amini23graphon}, the dynamics is typically described by a system of
stochastic differential equations driven by a continuum of independent Brownian motions. We show that such a family admits a natural realization as a standard Brownian motion on the Fubini extension space. This representation allows various systems of stochastic differential equations studied in the literature \cite{coppini25nonlinear,amini25gqfs,bayraktar23graphon,crucianelli24interacting} to be viewed as a single McKean-Vlasov type equation on the extension space.  This reformulation simplifies the analysis of well-posedness and provides a rigorous foundation for efficient numerical schemes based on random sampling of the index space. Moreover,
we establish a Girsanov theorem for a collection of essentially pairwise
independent Brownian motions, which makes it possible to construct a
Brownian motion on a new space, where even though the whole process is Brownian, its components are no longer
Brownian individually.

The remainder of the paper is organized as follows. In Section~\ref{sec:Fubini} we recall the Fubini extension framework. Section~\ref{sec:processes} develops processes on
the Fubini extension space and shows that a collection of Brownian motions constitutes a
standard Brownian motion on the extension (Theorem~\ref{thmwienerfubini}).
Section~\ref{sec:application} discusses applications of viewing the system as a single process: we
recast graphon SDEs as a single McKean-Vlasov type equation, and we establish a
Girsanov theorem in Section~\ref{sec:girsanov} for a collection of pairwise independent Brownian motions on the Fubini extension
space and discuss its consequences.

 \paragraph*{Notation.}
Fix a finite horizon $T>0$. For a separable Banach space $E$, set
$\mathcal{C}_E:=\mathcal {C}([0,T];E)$
for the space of continuous functions from $[0,T]$ to $E$, endowed with the
topology of uniform convergence.

For a measure space $(S,\Sigma,\mu)$ and a seperable Banach space $E$, define
$$
L^2_{E}(S,\mu)
:=\Big\{ X:S\to E \;\text{is }\Sigma\text{--}\mathfrak{B}(E)\text{-measurable and }
\int_S \|X(s)\|_E^2\,\mu(\mathrm{d}s)<\infty \Big\}.
$$
As usual, we identify elements of $L^2_{E}(S,\mu)$
that are equal $\mu-$almost everywhere (a.e.).
Let $I := [0,1]$ with Borel $\sigma$–algebra $\mathfrak{B}_I$ and Lebesgue measure
$\lambda_I$ on $(I,\mathfrak{B}_I)$.

Throughout the paper, $C$ denotes a positive constant whose value may change
from line to line.

\section{Continuum of random variables and Fubini extension}\label{sec:Fubini}

We begin by recalling a very important notion due to Sun \cite{sun98almost} that shows that all the notions of independence are, in fact, almost identical to their pairwise counterparts in an ideal setting of a continuum of random variables.

\begin{definition}[Essential pairwise independence]
Let $(I', \mathcal{I}', \lambda')$ and $(\Omega', \mathcal{F}', \mathbb{Q}')$ be two probability spaces. A random variable $\Theta : I' \times \Omega' \to \mathbb{R}$, is said to be \emph{essentially pairwise independent} (e.p.i.) if for $\lambda'$-almost every $u \in I'$, the random variables $\Theta(u,.)$ and $\Theta(v,.)$ are independent for $\lambda'$-almost every $v \in I'$. 
\end{definition}

We now turn to the notion of Fubini extension, which provides the appropriate
probability space to host a jointly measurable e.p.i. family. This extension
enlarges the classical product space in such a way that both independence and
Fubini’s theorem are preserved, and will serve as the basic framework for all
the results in this paper. 
 Before doing so, recall that  an extension $\boldsymbol{\pi}$ of a probability space $(S,\Sigma,\mu)$ to a  new probability space $(S',\Sigma',\mu')$
\begin{align*}
    \boldsymbol{\pi} : (S',\Sigma',\mu') \mapsto (S,\Sigma,\mu)
\end{align*}
is a probability preserving measurable map $\boldsymbol{\pi} : S' \mapsto S$, such that
$$\mu'(\boldsymbol{\pi}^{-1}(B)) = \mu(B), \quad \forall B \in \Sigma.$$

\begin{definition}[Fubini extension] Let $(I',\mathcal{I}',\lambda')$ and
$(\Omega',\mathcal{F}',\mathbb{Q}')$ be two probability spaces.
A \emph{triple} $(I'\times\Omega' ,\mathcal{V}',\mathcal{Q}')$ is called a
Fubini extension of the product space $(I'\times\Omega',\mathcal{I}'\otimes\mathcal{F}',\lambda'\otimes\mathbb{Q}')$ if for any $\mathcal{Q}'$-integrable function $g$ on $(I'\times \Omega',\mathcal{V}')$ real valued:
\begin{itemize}
\item the functions $g_u : \omega \mapsto g(u,\omega)$ and $g_{\omega} : u \mapsto g(u,\omega)$ are integrable on $(\Omega', \mathcal{F}',\mathbb{Q}') $ for $\lambda'-$a.e. $u \in I'$, and on $(I',\mathcal{I}',\lambda')$ for $\mathbb{Q}'$-a.e. $\omega \in \Omega'$, respectively. 
\item the functions $u \mapsto \int_{\Omega'}g_u(\omega)\mathbb{Q}'(\mathrm{d}\omega)$ and $\omega \mapsto \int_{I'}g_{\omega}(u)\lambda'(\mathrm{d}u)$   are integrable, respectively, on $(I',\mathcal{I}',\lambda')$ and  $(\Omega', \mathcal{F}',\mathbb{Q}') $, with 
{{\small} \begin{align*}\int_{I'\times\Omega'}g(u,\omega)\mathcal{Q}'(\mathrm{d}u,\mathrm{d}\omega)  &= \int_{I'}\Bigl(\int_{\Omega'}g_u(\omega)\mathbb{Q}'(\mathrm{d}\omega)\Bigr)\lambda'(\mathrm{d}u)           =\int_{\Omega'}\Bigl(\int_{I'} g_{\omega}(u)\lambda'(\mathrm{d}u)\Bigr)\mathbb{Q}'(\mathrm{d}\omega).
\end{align*}}
\end{itemize}
It is common in the literature to denote the Fubini extension
$(I'\times \Omega', \mathcal{V}', \mathcal{Q}')
$ by $(I' \times \Omega', \mathcal{I} '\boxtimes \mathcal{F}', \lambda' \boxtimes \mathbb{Q}')$ to reflect the fact that the probability space has its marginal space $(I',\mathcal{I}',\lambda')$ and $(\Omega', \mathcal{F}',\mathbb{Q}')$.
\end{definition}

The following theorem from \cite[Theorem 1]{sun09individual}, guarantees the existence of a Fubini-extension space that carries a collection of e.p.i.\ jointly measurable random variables.

\begin{theorem}[\cite{sun09individual}]\label{thm-sun}
Consider $(I,\mathfrak{B}_{I},\lambda_{I})$ to be the Lebesgue index space. Then there exists a probability space $(I,\mathcal{I},\lambda)$ extending $(I,\mathfrak{B}_{I},\lambda_{I})$, a probability space $(\Omega,\mathcal{F},\mathbb{Q})$, and a Fubini extension 
$
(I\times\Omega,\mathcal{V},\mathcal{Q})$ of $
(I\times\Omega,\mathcal{I}\otimes\mathcal{F},\lambda\otimes\mathbb{Q})$
such that for any measurable mapping 
{{\small} \begin{align*}
\varphi : I \to \mathrm{P}(E),
\end{align*}}
where $\mathrm{P}(E)$ denotes the set of Borel probability measures on $E$, there exists a $\mathcal{V}$-measurable random variable,
{{\small} \begin{align*}
\Theta \colon I\times\Omega \to E,
\end{align*}}
such that the random variables $\Theta(u,.)$ are e.p.i. and 
{{\small} \begin{align*}
\mathbb{Q}\circ (\Theta({u},.)^{-1})=\varphi(u),
\quad\text{for all } u\in I.
\end{align*}}
\end{theorem}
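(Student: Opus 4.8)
\medskip
\noindent\textbf{Proof proposal.} The plan is to decouple the two requirements on $\Theta$ — the prescribed marginal laws and the essential pairwise independence — by reducing the general case to a single canonical construction. First I would observe that it suffices to build \emph{one} Fubini extension $(I\times\Omega,\mathcal{V},\mathcal{Q})$, with index space $(I,\mathcal{I},\lambda)$ extending $(I,\mathcal{B}_I,\lambda_I)$, carrying a \emph{single} jointly $\mathcal{V}$-measurable family $U:I\times\Omega\to[0,1]$ whose rows $U(u,\cdot)$ are e.p.i.\ and satisfy $\mathbb{Q}\circ U(u,\cdot)^{-1}=\lambda_{[0,1]}$, the uniform law on $[0,1]$, for every $u\in I$. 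Indeed, given such a $U$ and an arbitrary measurable $\varphi:I\to\mathrm{P}(\R)$, I would set
\[
\Theta(u,\omega):=G_u\bigl(U(u,\omega)\bigr),\qquad G_u(r):=\inf\{x\in\R:\ F_{\varphi(u)}(x)\ge r\},
\]
where $F_{\varphi(u)}$ is the cumulative distribution function of $\varphi(u)$. The inverse-transform identity then yields $\mathbb{Q}\circ\Theta(u,\cdot)^{-1}=\varphi(u)$ for every $u$, and since $\Theta(u,\cdot)$ is a deterministic function of $U(u,\cdot)$ alone, the e.p.i.\ property is inherited verbatim from that of $U$: measurable functions applied row-wise preserve independence, and the exceptional $\lambda$-null sets are unchanged. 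Crucially, the \emph{same} space and the \emph{same} $U$ serve every $\varphi$, which is exactly the uniformity (richness) demanded by the statement; atomlessness of the uniform law is what makes $U$ universal.

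The second step is to check that this reduction is measurable. I would verify that $(u,r)\mapsto G_u(r)$ is $\mathcal{B}_I\otimes\mathcal{B}_{[0,1]}$-measurable: measurability of $\varphi$ makes $(u,x)\mapsto F_{\varphi(u)}(x)$ jointly measurable (by monotonicity and right-continuity in $x$), and the standard identity $\{(u,r):G_u(r)\le x\}=\{(u,r):r\le F_{\varphi(u)}(x)\}$ transfers this to the quantile function. Composing the measurable $(u,r)\mapsto G_u(r)$ with the $\mathcal{V}$-measurable map $(u,\omega)\mapsto(u,U(u,\omega))$ shows that $\Theta$ is $\mathcal{V}$-measurable, as required.

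It remains to carry out the canonical construction of $U$, which is where the genuine work lies. Here I would follow Sun's exact law of large numbers via nonstandard analysis. Fix an infinite hypernatural $N$ and take the hyperfinite index set $\hat I=\{1/N,\dots,N/N\}$ together with a hyperfinite sample space $\hat\Omega$, each equipped with its internal uniform counting measure; on the internal product one realizes, by transfer of the elementary finite-dimensional fact, an internal array $\hat U$ whose rows are internally i.i.d.\ uniform. Passing to Loeb measures converts the internal marginals and the internal product measure into countably additive probability measures, and I would identify the Loeb index space, via the standard-part map, with an extension $(I,\mathcal{I},\lambda)$ of the Lebesgue unit interval. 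Keisler's Fubini theorem for Loeb product measures then yields precisely the two defining clauses of a Fubini extension, so that $(I\times\Omega,\mathcal{V},\mathcal{Q})$ with $\mathcal{V}\supsetneq\mathcal{I}\otimes\mathcal{F}$ is a Fubini extension; and the exact law of large numbers, applied to the internally independent rows, upgrades internal independence to essential pairwise independence of $U:={}^{\circ}\hat U$.

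The main obstacle is exactly this last construction. The delicate points are that the \emph{Loeb product} measure, which strictly contains the product of the Loeb marginals, is what simultaneously delivers joint measurability of the continuum-indexed family and the iterated-integral identity, and that internal mutual independence descends to \emph{essential pairwise} independence only after one controls the standard part through Keisler's Fubini theorem together with the requisite $S$-integrability. Once $U$ is in hand, the reduction above is routine, and the flexibility of the inverse transform is what lets a single Fubini extension host an e.p.i.\ realization of every measurable marginal profile $\varphi$.
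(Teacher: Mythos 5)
\noindent First, a point of reference: the paper does not prove this statement at all --- it is quoted as an imported result from Sun--Zhang \cite{sun09individual}, so your proposal can only be measured against the construction in that reference. Within that comparison, your first two steps are sound and standard: reducing an arbitrary measurable profile $\varphi:I\to\mathrm{P}(\mathbb{R})$ to a single universal family $U$ with uniform e.p.i.\ rows via the quantile transform $\Theta(u,\omega)=G_u(U(u,\omega))$, and the joint-measurability check for $(u,r)\mapsto G_u(r)$, are both correct (with the minor caveat that $\varphi$ is measurable with respect to the extended $\sigma$-algebra $\mathcal{I}$, so $G$ is $\mathcal{I}\otimes\mathcal{B}_{[0,1]}$-measurable rather than $\mathcal{B}_I\otimes\mathcal{B}_{[0,1]}$-measurable; this changes nothing). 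These steps correctly isolate the heart of the matter: constructing one rich Fubini extension, over an index space that extends the \emph{Lebesgue} unit interval, carrying a jointly measurable uniform e.p.i.\ family.

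The genuine gap is in your third step, and it is exactly the step that constitutes the content of the cited theorem. The hyperfinite Loeb product construction (this is Sun's 2006 result) produces a rich Fubini extension whose index space is the Loeb space $(T,L(T),\lambda_T)$ of a hyperfinite set $T$ --- not an extension of $([0,1],\mathcal{B}_I,\lambda_I)$. You propose to ``identify the Loeb index space, via the standard-part map, with an extension of the Lebesgue unit interval,'' but no such identification exists: $\mathrm{st}:T\to[0,1]$ is a measure-preserving \emph{surjection} with enormous fibers, so it exhibits the Lebesgue interval as a quotient of the Loeb space, which is the wrong direction entirely. Pushing forward along $\mathrm{st}$ gains nothing, since by Anderson's standard-part theorem the $\sigma$-algebra $\{B\subset[0,1]:\mathrm{st}^{-1}(B)\in L(T)\}$ is exactly the Lebesgue $\sigma$-algebra, and it is known (this is why the theorem is stated with a \emph{proper} extension; see Podczeck's characterization via saturated / super-atomless spaces) that the Lebesgue $\sigma$-algebra itself can never serve as the index $\sigma$-algebra of a rich Fubini extension. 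Nor can the process be transported down along $\mathrm{st}$: distinct points $t\neq t'$ in the same fiber $\mathrm{st}^{-1}(u)$ carry (essentially) \emph{independent} rows of the internal i.i.d.\ array, so ${}^{\circ}\hat U$ is not constant on fibers and there is no $F$ on $[0,1]\times\Omega$ with $F\circ(\mathrm{st}\times\mathrm{id})={}^{\circ}\hat U$ a.e. Closing this gap --- i.e., getting from a Loeb index space to a genuine extension of the Lebesgue interval --- is the main contribution of \cite{sun09individual}, and it requires a device absent from your sketch: roughly, one must transfer the entire Fubini structure and the process backwards through a suitably chosen injection of $[0,1]$ into $T$ (a transversal of the monads, one point per standard real) whose image has full Loeb outer measure, or else abandon the standard-part route and build the Lebesgue extension by other (e.g.\ purely measure-theoretic) means. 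As written, your construction terminates at Sun's 2006 statement and does not reach the one being claimed.
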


{{\small} \begin{center}
\begin{tikzpicture}[
  node distance=2cm,
  every node/.style={inner sep=2pt},
  >=Latex
]
% Positions
\node (A) at (0,2)   {$\bigl(I,\mathfrak{B}_{\mathcal{I}},\lambda_{I}\bigr)$};
\node[right=0.3cm of A] {Lebesgue index space};

\node (D) at (0,0) {$\bigl(I,\mathcal{I},\lambda\bigr)$};
\node[left=0.3cm of D] {Index space};

\node (B) at (6,0)   {$\bigl(\Omega,\mathcal{F},\mathbb{Q}\bigr)$};
\node[right=0.3cm of B] {Sample space};

\node (E) at (3,-2) {$\bigl(I\times\Omega,\mathcal{I}\otimes\mathcal{F}, \lambda\otimes\mathbb{Q}\bigr)$};
\node[right=0.3cm of E] {Product space};

\node (F) at (3,-4)  {$\bigl(I\times\Omega, \mathcal{V},\mathcal{Q}\bigr)$};
\node[right=0.3cm of F] {Fubini extension space};

% Arrows
\draw[->] (A) -- node[left]{{\small} extend} (D);
\draw[->, dashed] (D) -- (E);
\draw[->, dashed] (B) -- (E);
\draw[->] (E) -- node[left]{{\small} extend} (F);
\end{tikzpicture}

\begin{center}
    \textbf{Figure :} Schematic diagram for construction of  Fubini extension probability space. 
\end{center}
\end{center}
}

In the following, we work on 
$(I\times\Omega,\mathcal{V},\mathcal{Q})$, which refers  to the Fubini extension probability space with marginals:
\begin{itemize}
    \item Index space: $(I,\mathcal{I},\lambda)$,
    \item Sample space:
$(\Omega,\mathcal{F},\mathbb{Q})$.
\end{itemize}

\section{Processes on Fubini extension space}\label{sec:processes}

Before starting with processes on the Fubini extension space, let us recall some
basic facts about functions on product spaces.

A function $\Theta:I\times\Omega\to E$ can be viewed in two ways. If we fix
$u\in I$, we obtain a function of $\omega\in\Omega$,
$$
\Theta(u,\cdot): \Omega \to E,\qquad \omega\mapsto \Theta(u,\omega).
$$
For convenience, we denote $\theta_{u}(\omega):=\Theta(u,\omega)$. Thus any
two–variable function $\Theta$ corresponds to an $I$–indexed family of
one–variable functions,
$$
\Theta \equiv \{\theta_{u}:\Omega\to E,\; u\in I\}.
$$

The use of the Fubini extension space ensures the existence of a jointly measurable collection $\{ \theta_u : u \in I\} $. Hence, for the random variable $\Theta $, the advantage of this viewpoint is that we can deal directly with random variable \footnote{This is also valid for stochastic processes, since they can be viewed as random variables taking values in certain Polish path spaces.} $\Theta$ defined on the Fubini extended space $(I\times\Omega, \mathcal{V},\mathcal{Q})$.

\subsection{Brownian motion}
{We first recall the standard definitions of a Brownian motion and an e.p.i. collection of Brownian motions on a probability space.

\begin{definition}[Brownian motion]\label{standardwiener}
A process $(\mathbf{B}_t)_{t \geq 0}$ with $\mathbf{B}_t: I\times\Omega \to \mathbb{R}$
is a standard Brownian motion on $(I\times\Omega,\mathcal V,\mathcal{Q})$ if
\begin{enumerate}
  \item $\mathbf{B}_0=0$, $\mathcal Q$-a.s.;
  \item For all $0\leq s<t$, $\mathbf{B}_t-\mathbf{B}_s\sim \mathcal{N}(0,t-s)$ under $\mathcal Q$;
  \item For any $0\leq t_0<\cdots<t_n$, the increments
        $\mathbf{B}_{t_k}-\mathbf{B}_{t_{k-1}}$ are independent under $\mathcal Q$;
  \item $t \mapsto \mathbf{B}_t(u,\omega)$ is continuous for $\mathcal{Q}$-a.e.\ $(u,\omega)$.
\end{enumerate}
\end{definition}

\begin{definition}[e.p.i. collection of Brownian motion]
A collection $\{ (B^{u}_t)_{t \geq 0} : u \in I\}$ on $(I\times\Omega,\mathcal{V},\mathcal{Q})$ is called an e.p.i. collection of Brownian motion if:
\begin{enumerate}
\item  The map $(u,\omega)\to B^u_t(\omega)$ is jointly measurable for all $t$;
\item The collection is essentially pairwises independents and, for every $u\in I$, the process $(B^{u}_{t})_{t \geq 0},$
is a standard Brownian motion  on $(\Omega,\mathcal{F},\mathbb{Q})$.
\end{enumerate}
\end{definition}

The following theorem shows that collections of e.p.i. Brownian motions admit a natural realization on the Fubini extension.

\begin{theorem}\label{thmwienerfubini}
Let $\{ B^u : u \in I\}$ a collection of e.p.i. Brownian motions on the Fubini extension space $(I\times \Omega,\mathcal{V},\mathcal{Q}).$ Then the process defined by 
$$
\mathbf{B}_t(u,\omega) := B_t^{u}(\omega), \qquad (u,\omega)\in I\times\Omega,
$$
is a standard Brownian motion on the Fubini extension
$(I\times\Omega,\mathcal{V},\mathcal{Q})$.

Reciprocally, let $(\mathbf{B}_t)_{t \geq 0}$ be a standard Brownian motion on the Fubini
extended space $(I\times\Omega,\mathcal{V},\mathcal{Q})$ such that the family $\{ (\mathbf{B}_t(u,.)_{t \geq 0} : \ u\in I\}$ is e.p.i and identically distributed elements. Then the family
forms an e.p.i. collection of Brownian motions.
\end{theorem}

\begin{proof}
We first prove the direct implication. We verify points (1)-(4) of Definition~\ref{standardwiener} for the process $(\mathbf{B}_t)_{t\ge0}$. The first point is straightforward. For the second point, consider a bounded measurable function $g$. Then
{{\small} 
\begin{align*}
    \mathbb{E}_{\mathcal{Q}}[g(\mathbf{B}_t-\mathbf{B}_s)] &= \int_{I}\mathbb{E}_{\mathbb{Q}}[g(B^u_t-B_s^{u})]\lambda(\mathrm{d}u)\\
    &= \int_{I}\int_{\mathbb{R}}g(x)\nu_t(x)\mathrm{d}x\lambda(\mathrm{d}u)= \int_{\mathbb{R}}g(x)\nu_t(x)\mathrm{d}x,
\end{align*}}
where $\nu_t$ is a density of the normal law $\mathcal{N}(0,t-s)$. For the independence of increments,  fix $0\leq t_0<\cdots<t_n$ and bounded measurable function $\varphi_k$. Set
{{\small} \begin{align*}
\mathbf{Z}_k(u,\omega)=\varphi_k\big(\mathbf{B}_{t_k}(u,\omega)-\mathbf{B}_{t_{k-1}}(u,\omega)\big)
=\varphi_k\big(B^u_{t_k}(\omega)-B^u_{t_{k-1}}(\omega)\big).
\end{align*}}
Then,
{{\small} \begin{align*}
\mathbb{E}_{\mathcal{Q}}\Big[\prod_{k=1}^n \mathbf{Z}_k\Big]
&= \int_{I\times\Omega} \prod_{k=1}^n 
\varphi_k\big(B^u_{t_k}(\omega)-B^u_{t_{k-1}}(\omega)\big)
\mathcal{Q}(\mathrm{d}u,\mathrm{d}\omega)\\
&\stackrel{\text{Fubini}}{=} 
\int_I \mathbb{E}_{\mathbb{Q}}\Big[\prod_{k=1}^n 
\varphi_k\big(B^u_{t_k}-B^u_{t_{k-1}}\big)\Big]\lambda(\mathrm{d}u)\\
&\stackrel{\text{indep. incr.}}{=}
\int_I \prod_{k=1}^n \mathbb{E}_{\mathbb{Q}}\Big[
\varphi_k\big(B^u_{t_k}-B^u_{t_{k-1}}\big)\Big]\lambda(\mathrm{d}u)\\
&\stackrel{\text{same law in }u}{=} 
\int_I \prod_{k=1}^n \mathbb{E}_{\mathbb{Q}}\Big[
\varphi_k\big(B_{t_k}^{1}-B_{t_{k-1}}^{1}\big)\Big]\lambda(\mathrm{d}u)\\
&= \prod_{k=1}^n \mathbb{E}_{\mathbb{Q}}\Big[
\varphi_k\big(B^1_{t_k}-B^1_{t_{k-1}}\big)\Big]\\
&= \prod_{k=1}^n \int_I \mathbb{E}_{\mathbb{Q}}\Big[
\varphi_k\big(B^u_{t_k}-B^u_{t_{k-1}}\big)\Big]\lambda(\mathrm{d}u)\\
&= \prod_{k=1}^n \mathbb{E}_{\mathcal{Q}}[\mathbf{Z}_k].
\end{align*}}

\medskip
For point (4) (path continuity), define
{{\small} \begin{align*}
    A = \big\{(u,\omega) : t \mapsto \mathbf{B}_{t}(u,\omega) \; \text{is continuous}  \big\},
\end{align*}}
and define the set
{{\small} \begin{align*}
    A_u  = \big\{\omega : t \mapsto \mathbf{B}_{t}(u,\omega) \; \text{is continuous}  \big\}.
\end{align*}}
Then, for all $(u,\omega) \in I\times\Omega$, we have $\mathbb{I}_{A}(u,\omega) = \mathbb{I}_{A_u}(\omega)$. 
Thus,
{{\small} \begin{align*}
\mathcal{Q}\big(A)=\mathbb{E}_{\mathcal{Q}}[\mathbb{I}_{A}] = \int_{I}\underbrace{\mathbb{E}_{\mathbb{Q}}[\mathbb{I}_{A_u}]}_{\mathbb{Q}(A_u) = 1}\lambda(\mathrm{d}u) = 1.
\end{align*}}
Hence $\mathbf{B}$ is a Brownian motion.

We now prove the reciprocal implication. Fix $v\in I$ and let $g$ be bounded measurable. Then
{{\small}
\begin{align*}
    \mathbb{E}_{\mathcal{Q}}[g(\mathbf{B}_t-\mathbf{B}_s)]
    &= \int_{I}\mathbb{E}_{\mathbb{Q}}[g(B_t^u-B_s^u)]\lambda(\mathrm{d}u) \\
    &= \int_{I}\mathbb{E}_{\mathbb{Q}}[g(B_t^v-B_s^v)]\lambda(\mathrm{d}u) \\
    &= \mathbb{E}_{\mathbb{Q}}[g(B_t^v-B_s^v)], \quad \forall v\in I,
\end{align*}}
where we used the identity in law in the second line. 

For independence of increments we proceed in the same way. For continuity, set
{{\small}
\begin{align*}
A := \{(u,\omega)\in I\times\Omega:\ t\mapsto \mathbf{B}_t(u,\omega)\ \text{is continuous}\}.
\end{align*}}
By Definition~\ref{standardwiener}~(4), we have $\mathcal{Q}(A)=1$. For $u\in I$, define
{{\small}
\begin{align*}
A_u := \{\omega\in\Omega:\ (u,\omega)\in A\}.
\end{align*}}
Then $\mathbb{I}_A(u,\omega)=\mathbb{I}_{A_u}(\omega)$, and by Fubini,
{{\small}
\begin{align*}
\int_I \mathbb{Q}(A_u)\lambda(\mathrm{d}u)
= \mathbb{E}_{\mathcal{Q}}[\mathbb{I}_A]
= \mathcal{Q}(A) = 1.
\end{align*}}
Take $H := \{u\in I:\ \mathbb{Q}(A_u)\leq c<1\}$, then
{{\small}
\begin{align*}
1 &= \int_I \mathbb{Q}(A_u)\lambda(\mathrm{d}u) \\
  &= \int_H \mathbb{Q}(A_u)\lambda(\mathrm{d}u) + \int_{H^c}\mathbb{Q}(A_u)\lambda(\mathrm{d}u)\\ 
  &\leq c\lambda(H) + \lambda(H^c).
\end{align*}}
Thus $(c-1)\lambda(H)\geq 0$, which implies $\lambda(H)=0$. Therefore
$\mathbb{Q}(A_u)=1$ for $\lambda$-a.e.\ $u$, i.e., the paths
$t\mapsto B_t^u(\omega)$ are continuous for $\mathbb{Q}$-a.e.\ $\omega$.

Hence  $(B^u)_{u \in I}$ is an e.p.i family of
Brownian motion.

\end{proof}

{ \begin{remark}
%{\color{purple}} 
We emphasize that the theorem establishes an equivalence between an e.p.i. family of Brownian motions on the sample space and a Brownian motion on the Fubini extension with identically distributed e.p.i. elements. The condition that the family be identically distributed is necessary. Indeed, without it, one can construct an e.p.i. family that aggregates to a Brownian motion on the extended space, while the individual processes are not Brownian motions themselves.

To illustrate this, let $\mathsf{W}_{\mathbb{R}}:=C_{\mathbb{R}}(\mathbb{R}_{+})$ be equipped with the topology of uniform convergence on every compact subset of $\mathbb{R}_{+}$ and let $\mu_B$ be the Wiener measure on $\mathsf{W}_{\mathbb{R}}$ with canonical process $(B_t)_{t \geq 0}$. Consider the set
{{\small} \begin{align*}
A := \{x \in \mathsf{W}_{\mathbb{R}}: B_1(x) \geq 0\},
\end{align*}}
so that $\mu_B(A)=1/2$, and define the conditional measures $\mu^+ := \mu_B(\cdot\mid A)$ and $\mu^- := \mu_B(\cdot\mid A^c)$. Then $\mu_B = \tfrac{1}{2}\mu^+ + \frac{1}{2}\mu^-$. Under $\mu^+$ or $\mu^-$, the canonical process is not a Brownian motion.

Now, work on the Fubini extension $(I\times\Omega,\mathcal{V},\mathcal{Q})$ of the product space $(I\times\Omega, \mathcal{F}\otimes \mathcal{I}, \lambda \otimes \mathbb{Q})$. By Theorem~\ref{thm-sun}, there exists a $\mathcal{V}$-measurable map $\Theta:I\times\Omega\to \mathsf{W}_{\mathbb{R}}$ such that the family $\{\Theta(u,\cdot):u\in I\}$ is e.p.i. and
{{\small} \begin{align*}
\mathbb{Q}\circ\Theta(u,\cdot)^{-1} =
\begin{cases}
\mu^+,& u<1/2,\\
\mu^-,& u \geq 1/2.
\end{cases}
\end{align*}}
For each $u\in I$ and $t \geq 0$, set $X_t^u(\omega) := \Theta(u,\omega)(t)$ and $\mathbf{X}_t(u,\omega) := X_t^u(\omega)$. Then, $(X_t^u)_{t\ge 0}$ has law $\mu^+$ or $\mu^-$ and is therefore not a Brownian motion. On the other hand, for any $0\le t_1<\dots<t_k$ and any bounded measurable $\psi:\mathbb{R}^k\to\mathbb{R}$, Fubini's theorem and the identity $\mu_B=\frac12\mu^+ + \frac12\mu^-$ yield
{{\small} \begin{align*}
\mathbb{E}_{\mathcal{Q}}\big[\psi(\mathbf{X}_{t_1},\dots,\mathbf{X}_{t_k})\big]
&= \int_I \mathbb{E}_{\mathbb{Q}}\big[\psi(X_{t_1}^u,\dots,X_{t_k}^u)\big]\lambda(\mathrm{d}u)\\
&= \frac{1}{2}\int_{C_{\mathbb{R}}} \psi(x(t_1),\dots,x(t_k))\mu^+( \mathrm{d}x)
   + \frac12 \int_{C_{\mathbb{R}}} \psi(x(t_1),\dots,x(t_k))\mu^-( \mathrm{d}x)\\
&= \int_{C_{\mathbb{R}}} \psi(x(t_1),\dots,x(t_k))\mu_B(\mathrm{d}x).
\end{align*}}
Thus, $(\mathbf{X}_t)_{t \geq 0}$ has the same finite-dimensional distributions as a standard Brownian motion and continuous paths by construction. Hence $\mathbf{X}$ is a Brownian motion on $(I\times\Omega,\mathcal{V},\mathcal{Q})$.
\end{remark}}

We conclude this section with an important result on the Fubini extension which is the Exact Law of Large Numbers (ELLN).

\begin{proposition}[\cite{sun06exact} Corollary 2.10 ]\label{ELLN}
Let $\Theta $ be an integrable random variable on $(I\times \Omega,\mathcal{V},\mathcal{Q})$. If $\Theta$ is e.p.i., then the sample mean equals the mean of $\Theta$, 
    \begin{align*}
\int_{I}\Theta(u,\tilde{\omega})\lambda(\mathrm{d}u) = \int_{I \times \Omega} \Theta(u,\omega)\mathcal{Q}(\mathrm{d}u,\mathrm{d}\omega), \;\; \forall \tilde{\omega} \in \Omega. 
    \end{align*}
\end{proposition}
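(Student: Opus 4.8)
The plan is to identify the ``sample mean''
\[
f(\omega)\;:=\;\int_I \Theta(u,\omega)\,\lambda(\mathrm{d}u),
\]
which is well defined and $\mathbb{Q}$-integrable for $\mathbb{Q}$-a.e.\ $\omega$ by the Fubini property of $\mathcal{Q}$, and to show that it is $\mathbb{Q}$-almost surely equal to the constant $m:=\int_{I\times\Omega}\Theta\,\mathrm{d}\mathcal{Q}$; the stated pointwise identity for $\tilde\omega$ then amounts to working with this canonical version. The core of the argument is a second-moment computation: I would establish $\mathbb{E}_{\mathbb{Q}}[f]=m$ and $\mathbb{E}_{\mathbb{Q}}[f^2]=m^2$, whence $\mathrm{Var}_{\mathbb{Q}}(f)=0$ and $f\equiv m$ $\mathbb{Q}$-a.s.

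First I would reduce to the case where $\Theta$ is bounded, hence square-integrable. Writing $\Theta_N:=(-N)\vee(\Theta\wedge N)$, each truncation is again e.p.i., since essential pairwise independence is preserved under composition with the fixed measurable map $x\mapsto(-N)\vee(x\wedge N)$, and $\Theta_N\to\Theta$ in $L^1(\mathcal{Q})$ by dominated convergence. Granting the result for each $\Theta_N$, so that $\int_I\Theta_N(u,\cdot)\,\lambda(\mathrm{d}u)$ equals the constant $m_N:=\int_{I\times\Omega}\Theta_N\,\mathrm{d}\mathcal{Q}$, the Fubini property gives $\big\|\,f-\int_I\Theta_N(u,\cdot)\,\lambda(\mathrm{d}u)\,\big\|_{L^1(\mathbb{Q})}\le\|\Theta-\Theta_N\|_{L^1(\mathcal{Q})}\to 0$, while $m_N\to m$; passing to the limit transfers the equality to $\Theta$. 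For the mean in the bounded case, a single application of the Fubini property yields $\mathbb{E}_{\mathbb{Q}}[f]=\int_{I\times\Omega}\Theta\,\mathrm{d}\mathcal{Q}=m$.

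For the second moment, writing $g(u):=\mathbb{E}_{\mathbb{Q}}[\Theta(u,\cdot)]$, I would obtain
\[
\mathbb{E}_{\mathbb{Q}}[f^2]=\int_I\!\int_I \mathbb{E}_{\mathbb{Q}}\big[\Theta(u,\cdot)\,\Theta(v,\cdot)\big]\,\lambda(\mathrm{d}u)\,\lambda(\mathrm{d}v),
\]
and then invoke essential pairwise independence: for $\lambda$-a.e.\ $u$ the variable $\Theta(u,\cdot)$ is independent of $\Theta(v,\cdot)$ for $\lambda$-a.e.\ $v$, so the integrand factorizes as $g(u)g(v)$ for $\lambda\otimes\lambda$-a.e.\ $(u,v)$. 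This gives $\mathbb{E}_{\mathbb{Q}}[f^2]=\big(\int_I g\,\mathrm{d}\lambda\big)^2=m^2$, and combined with $\mathbb{E}_{\mathbb{Q}}[f]=m$ forces the variance to vanish.

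The step I expect to be the main obstacle is the interchange producing the displayed double integral: it requires Fubini's theorem for the map $(u,v,\omega)\mapsto\Theta(u,\omega)\Theta(v,\omega)$ on the doubled space $I\times I\times\Omega$, and hence the existence of a Fubini extension of $(I\times I)\times\Omega$ on which this map is jointly measurable. This is precisely the delicate measure-theoretic input supplied by Sun's construction; establishing the joint measurability and the Fubini property on the doubled index is where the real work lies, after which the factorization and the variance cancellation are routine.
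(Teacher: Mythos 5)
You should know at the outset that the paper offers no proof of Proposition~\ref{ELLN}: it is quoted as Corollary~2.10 of \cite{sun06exact}, so there is no internal argument to compare against. Your overall strategy --- truncate to reduce to the bounded (hence square-integrable) case, then show the sample mean $f(\omega)=\int_I\Theta(u,\omega)\lambda(\mathrm{d}u)$ has mean $m$ and vanishing variance under $\mathbb{Q}$ --- is the standard route and is essentially Sun's own (an $L^2$ version first, then the $L^1$ independent version by truncation, precisely because truncation preserves independence). The truncation step and the passage to the limit are correct as you state them. Note also that the ``$\forall\,\tilde\omega\in\Omega$'' in the paper's statement can only hold $\mathbb{Q}$-almost everywhere (one may modify $\Theta$ on $I\times N$, $N$ a $\mathbb{Q}$-null set, without disturbing any hypothesis), so your reading via a canonical version is the right one.

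The genuine gap is exactly at the step you flag as the crux: you justify the identity $\mathbb{E}_{\mathbb{Q}}[f^2]=\int_I\int_I\mathbb{E}_{\mathbb{Q}}[\Theta(u,\cdot)\Theta(v,\cdot)]\,\lambda(\mathrm{d}u)\,\lambda(\mathrm{d}v)$ by postulating a Fubini extension of the doubled space $(I\times I)\times\Omega$ in which $(u,v,\omega)\mapsto\Theta(u,\omega)\Theta(v,\omega)$ is jointly measurable. No such object is supplied by the framework: the Fubini property is given only for $(I\times\Omega,\mathcal{V},\mathcal{Q})$, and Sun's existence theorem (recalled in Section~\ref{sec:Fubini}) constructs a rich extension carrying \emph{some} e.p.i.\ process with prescribed marginals; it does not furnish, for an \emph{arbitrary given} e.p.i.\ $\Theta$ on an \emph{arbitrary given} extension, a doubled-index extension adapted to it. As written, your key identity is assumed rather than proved. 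The repair needs no new construction: iterate the given one-variable Fubini property. In the bounded case, $f$ is $\mathcal{F}$-measurable by the Fubini property, so $(u,\omega)\mapsto\Theta(u,\omega)f(\omega)$ is $\mathcal{V}$-measurable and $\mathcal{Q}$-integrable, whence
\begin{equation*}
\mathbb{E}_{\mathbb{Q}}[f^2]
=\int_{I\times\Omega}\Theta(u,\omega)f(\omega)\,\mathcal{Q}(\mathrm{d}u,\mathrm{d}\omega)
=\int_I\mathbb{E}_{\mathbb{Q}}\big[\Theta(u,\cdot)\,f\big]\,\lambda(\mathrm{d}u);
\end{equation*}
then, for $\lambda$-a.e.\ fixed $u$, $\Theta(u,\cdot)$ is $\mathcal{F}$-measurable, so $(v,\omega)\mapsto\Theta(u,\omega)\Theta(v,\omega)$ is $\mathcal{V}$-measurable and integrable, and a second application of the Fubini property together with essential pairwise independence gives
\begin{equation*}
\mathbb{E}_{\mathbb{Q}}\big[\Theta(u,\cdot)\,f\big]
=\int_I\mathbb{E}_{\mathbb{Q}}\big[\Theta(u,\cdot)\Theta(v,\cdot)\big]\,\lambda(\mathrm{d}v)
=\mathbb{E}_{\mathbb{Q}}[\Theta(u,\cdot)]\int_I\mathbb{E}_{\mathbb{Q}}[\Theta(v,\cdot)]\,\lambda(\mathrm{d}v).
\end{equation*}
Integrating over $u$ yields $\mathbb{E}_{\mathbb{Q}}[f^2]=m^2$, your variance argument closes, and the proof then coincides in substance with Sun's.
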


In particular for  a Brownian motion $\mathbf{B}$  on the Fubini extension space $(I\times \Omega,\mathcal{V},\mathcal{Q})$ with e.p.i elements, for all $(t,\omega) \in [0,T]\times\Omega,$
\begin{align*}
    \int_{I}B^{u}_t (\omega)\lambda(\mathrm{d}u) &= \int_{I} \mathbb{E}_{\mathbb{{Q}}}[B^u_t]\lambda(\mathrm{d}u)= \mathbb{E}_{\mathcal{Q}}[\mathbf{B}_t]= 0.
\end{align*}

\section{Single equation formulation}\label{sec:application}

Thanks to Theorem~\ref{thmwienerfubini} and the notion of the Fubini extension space, we can directly identify a system of stochastic differential equations, known in the literature as graphon SDEs~\cite{crucianelli24interacting} as a single equation on the Fubini extension space driven by a standard Brownian motion. 

In order to keep the presentation simple and avoid unnecessary technicalities, we work in the following setting: 
{{\small}
\begin{align*}
\mathcal{H} &:= L^2_{\mathbb{R}}(I\times\Omega,\mathcal{Q}), \quad \|X\|_{\mathcal{H}}^{2} := \int_{I\times\Omega}|X(u,\omega)|^2\mathcal{Q}(\mathrm{d}u,\mathrm{d}\omega),\\
\mathcal{H}_I &:= L^2_{\mathbb{R}}(I,\lambda), \quad \|\xi\|_{\mathcal{H}_I}^{2} := \int_{I}|\xi(u)|^2\lambda(\mathrm{d}u).
\end{align*}
}

For simplicity of presentation, we impose the following classical
assumptions on the coefficients of the SDEs. While more general conditions could be considered, the present formulation
is sufficient for our purposes.

\begin{assumption}[Lipschitz and linear growth conditions]\label{assumption1}
The functions $b,\sigma : \mathbb{R}\times\mathbb{R} \mapsto \mathbb{R}$ satisfy:
{{\small}
\begin{align}
    \|b(x,m) - b(x',m')\|_2 
      &\leq C\Big( \|x-x'\|_{2} + \|m-m'\|_2 \Big),\nonumber\\
    \|\sigma(x) - \sigma(x')\|_{2} &\leq C \|x-x'\|_{2} \\
    \|b(x,m)\|_{2} &\leq C\big(1 + \|x\|_{2} + \|m\|_{2}\big),\nonumber \\
    \|\sigma(x)\|_{2} &\leq C\big(1 + \|x\|_{2}\big),
\end{align}}
for some constant $C>0$ and for all $(x,m),(x',m')\in\mathbb{R}^2$.
\end{assumption}

\subsection{Graphon stochastic differential equations}
{ We briefly recall the notion of graphons relevant to our setting
(see \cite{lovasz12large}). A \emph{graphon} is a symmetric and Borel measurable function $ w : I\times I \to I $. We  
denote by $\mathcal{W}$ the space of graphon function.

Intuitively, graphons generalize the notion of graphs when we have a continuum of vertices, where $w(u,v)$ would represent the weight of edge between $u $ and $v$. These objects are particularly well-suited for describing limits
of convergent sequences of dense graphs.  

In recent years, a growing literature \cite{aurell22sto,bayraktar23graphon,coppini25nonlinear,crucianelli24interacting,amini25gqfs} deals with  a system of stochastic differential equations with graphon interaction, called graphon SDEs, of the form 

{\begin{align}\label{system}
\mathrm{d}\theta_{t,u}
  &= b\left(\theta_{t,u}, \int_I w(u,v)\phi_{t,v}\lambda(\mathrm{d}v)\right)\mathrm{d}t
    + \sigma\left(\theta_{t,u}\right)\mathrm{d}B_t^{u}\\
\phi_{t,u} &:= \mathbb{E}_{\mathbb{Q}}\left[\theta_{t,u}\right]\nonumber,
\end{align}}
where $w \in \mathcal{W}$ is a graphon function, and the collection $ \mathbf{B} = \{ B^{u} : u \in I \}$ is a family of e.p.i. Brownian motions.

\begin{remark}
In the above cited literature, one often encounters more general formulations of
graphon SDEs. For instance, in \cite{crucianelli24interacting} the dynamics are
typically written as
{{\small}
\begin{align*}
\mathrm{d}\theta_{t,u}
&= b\Big(t,\theta_{t,u},\int_{I} w(u,v)\,\mathcal{L}(\theta_{t,v})\,\lambda(\mathrm{d}v)\Big)\mathrm{d}t
 + \sigma\Big(t,\theta_{t,u},\int_{I} w(u,v)\,\mathcal{L}(\theta_{t,v})\,\lambda(\mathrm{d}v)\Big)\mathrm{d}B_t^{u},
\end{align*}
}
where the interaction kernel $w : I\times I \to \mathbb{R}$ is not necessarily
bounded and the measure $\mathcal{L}(\theta_{t,v})$ denotes the law of $\theta_{t,v}$. Our presentation restricts to the simpler case
\eqref{system} in order to streamline the exposition, but all the arguments
below extend without difficulty to such more general settings.
\end{remark}
 
Thanks to Theorem~\ref{thmwienerfubini}, the graphon SDE system \eqref{system} can be
expressed on the Fubini extension as a single SDE driven by a standard Brownian motion
$\mathbf{B} = (\mathbf{B}_t)_{t\ge 0}$. Define the process
\[
\Theta_t(u,\omega) := \theta_{t,u}(\omega), \qquad (u,\omega)\in I\times\Omega,
\]
and set $\Theta := (\Theta_t)_{t\ge 0}$. Then $\Theta$ solves
\begin{align}\label{compactform}
    \mathrm{d}\Theta_{t}
    &= b\big(\Theta_t,\mathbb{W}[\Phi_t]\big)\,\mathrm{d}t
      + \sigma(\Theta_t)\,\mathrm{d}\mathbf{B}_t,
\end{align}
where $\Phi_t : I\to\mathbb{R}$ is given by
\[
\Phi_t(u) := \mathbb{E}_{\mathbb{Q}}\big[\Theta_t(u,\cdot)\big],
\]
and $\mathbb{W}$ is the interaction operator
\[
\mathbb{W} : \mathcal{H}_I \to \mathcal{H}_I, \qquad
\eta \mapsto \mathbb{W}[\eta],
\]
defined for all $u\in I$ by
\[
\mathbb{W}[\eta](u) := \int_I w(u,v)\,\eta(v)\,\lambda(\mathrm{d}v).
\]

\begin{remark}
The operator $\mathbb{W}$ is well-defined and bounded on $\mathcal{H}_I$. We have by the Cauchy-Schwarz inequality,
{{\small}
\begin{align*}
    |\mathbb{W}[\eta](u)|^2 
    &= \Big| \int_{I} w(u,v)\eta(v)\lambda(\mathrm{d}v) \Big|^2 \\
    &\leq \left( \int_{I} |w(u,v)|^2 \lambda(\mathrm{d}v) \right) \left( \int_{I} |\eta(v)|^2 \lambda(\mathrm{d}v) \right) \\
    &\leq  \|\eta\|_{\mathcal{H}_I}^2.
\end{align*}}
Integrating over $u \in I$, we obtain $\|\mathbb{W}[\eta]\|_{\mathcal{H}_I} \leq  \|\eta\|_{\mathcal{H}_I}$.
Furthermore, $\mathbb{W}$ is a Hilbert-Schmidt operator since the graphon $w$ is bounded and thus square-integrable on $I\times I$.
\end{remark}
\begin{remark}
It is worth noting that  Equation \ref{compactform} does not coincide completely with
the standard McKean-Vlasov formulation, due to the presence of the kernel $\mathbb{W}$ and the expectation taken under
$\mathbb{Q}$. Our formulation also differs from the $L^2$--formulation in
\cite[Equation~3.3]{coppini25nonlinear}, where the stochastic system is
written as an equation in $\mathcal{H}_{I}$. In contrast, Equation~\ref{compactform}
treats $\Theta$ directly as a real-valued stochastic process on the Fubini extension space. A formulation close to ours was given in \cite[Equation~7]{aurell22sto}. In the special case of a constant graphon $w \equiv c$, we recover the standard McKean-Vlasov equation. Indeed, $\mathbb{W}[\mathbb{E}_{\mathbb{Q}}[\Theta_t]] = c\int_{I}\mathbb{E}_{\mathbb{Q}}[\Theta_t(u,.)]\lambda(\mathrm{d}u) = c\mathbb{E}_{\mathcal{Q}}[\Theta_t]$, and Equation~\ref{compactform} reduces to 
{{\small}
\begin{align*}
    \mathrm{d}\Theta_t &= b(\Theta_t,c\mathbb{E}_{\mathcal{Q}}[\Theta_t])\mathrm{d}t + \sigma(\Theta_t)\mathrm{d}\mathbf{B}_t.
\end{align*}}
\end{remark}

We now address the well-posedness of the single equation formulation \eqref{compactform}.

{\begin{proposition}\label{prop:well}
Under Assumption \ref{assumption1} and if $\Theta_0 \in \mathcal{H}$, there exists a unique solution $\Theta := (\Theta_t)_{0 \leq t \leq T} $ satisfying Equation~\ref{compactform}.

\end{proposition}}

The proof of this proposition relies on the following general well-posedness
result for a class of weighted McKean-Vlasov SDEs.

\begin{lemma}\label{wellposed-lemma} 
Under Assumption~\ref{assumption1}, consider a probability space $(\mathscr{A}, \mathcal{G}, \mathrm{P})$
    carrying a standard Brownian motion $(W_t)_{t \geq 0}$. Let $k : \mathscr{A} \times \mathscr{A} \longrightarrow \mathbb{R}$ be a bounded measurable kernel, and consider the stochastic differential equation~\footnote{Let us emphasize that Equation~\ref{weighted-mckean-vlasov} is not the standard McKean-Vlasov SDE because of
the presence of the kernel $k$.
     When the kernel is constant in the two-variables, we recover the simplest  Mckean-Vlasov equation.}
    {{\small} \begin{align}\label{weighted-mckean-vlasov}
    \mathrm{d}X_t = b(X_t, Y_t)\mathrm{d}t + \sigma(X_t)\mathrm{d}W_t,\qquad
    X_0 \in L^2_{\mathbb{R}}(\mathscr{A})x,
    \end{align}}
    where the coupling term $Y_t$ is defined as the weighted average
    {{\small} \begin{align*}
    Y_t(a) &= \int_{\mathscr{A}} k(a,{a'}) X_t(a')\mathrm{P}(\mathrm{d}a'), \; \forall a \in \mathscr{A}.
    \end{align*}}
    {Then \eqref{weighted-mckean-vlasov} admits a unique strong solution.}
\end{lemma}

\begin{proof}
The coupling term $Y_{\cdot}$ is well defined if  $X_{\cdot} \in L^2_{\mathbb{R}}(\mathscr{A})$. Indeed, 
{{\small} \begin{align*}
    \|Y_{\cdot}\|_{2}^{2} &= \int_{\mathscr{A}}\sup_{ 0 \leq t \leq T} |Y_{t}(a)|^{2}\mathrm{P}(\mathrm{d}a)\\
    &= \int_{\mathscr{A}}\sup_{0 \leq t \leq T}|k(a,a')X_t(a')|^2\mathrm{P}(\mathrm{d}a')\\
&\leq\|k\|_{\infty}\int_{\mathscr{A}}\sup_{0 \leq t \leq T}|X_t(a')|^2\mathrm{P}(\mathrm{d}a')\\
    &\leq \|k\|_{\infty}\|X_{\cdot}\|_2^{2} < \infty.
\end{align*}}
For any $\sigma(W_t)$-adapted $Z_{\cdot} \in L^2_{\mathbb{R}}(\mathscr{A})$, we introduce the following nonlinear map:
{{\small}
\begin{align*}
    (\Xi[Z_{\cdot}])_{t} &= X_0 + \int_{0}^{t}b(Z_s,\tilde{Z}_s)\mathrm{d}s + \int_{0}^{t}\sigma(Z_s)\mathrm{d}W_s,
\end{align*}}
where $\tilde{Z}_t(a) = \int_{\mathscr{A}}k(a,a')Z_t(a')\mathrm{P}(\mathrm{d}a') $. By Assumption~\ref{assumption1}, we obtain directly
{{\small} \begin{align*}
    \|\Xi[Z_{\cdot}]\|_{2}^{2} &\leq C\Big( 1 + \|Z_{\cdot}\|^2_2\Big). 
\end{align*}}

Hence $\|\Xi[Z_{\cdot}]\|_{2} < \infty$ for all $\sigma(W_t)$-adapted
$Z_{\cdot} \in L^2_{\mathbb{R}}(\mathscr{A})$, and $\Xi[Z_{\cdot}]$ is again
$\sigma(W_t)$-adapted. The continuity of the mapping $\Xi$ follows directly
from the Lipschitz assumption in \eqref{assumption1}. Now using the Picard iteration method, we show that the mapping $\Xi$ admits
a unique fixed point. Starting from any $\sigma(W_t)$-adapted process
$Z^{0}_t \in L^2_{\mathbb{R}}(\mathscr{A})$, we consider the sequence
$Z^{n+1}_{\cdot} = \Xi[Z_{\cdot}^{n}]$, noting that 
{{\small} 
\begin{align*}
    \|(\Xi(Z_{\cdot})_t - (\Xi[Z'_{\cdot}])_t\|_2 &\leq \sqrt{t}\|b(Z_{\cdot},\tilde{Z}_{\cdot}) - b(Z'_{\cdot},\tilde{Z}'_{\cdot})\|_2 + \|\sigma(Z_{\cdot}) - \sigma(Z_{\cdot}')\|_2\\
    &\stackrel{\text{Lipschitz}}{\leq} 2C(\sqrt{T}+1)\|Z_{\cdot} - Z_{\cdot}'\|,
\end{align*}}
setting $K = 2C(\sqrt{T}+1)$ and iterating the bound, we obtain 
{{\small} 
\begin{align*}
    \|(\Xi^{(n)}(Z_{\cdot})_t^{} - (\Xi^{(n)}[Z'_{\cdot}])_t\|_2^2 &\leq \frac{K^{2n}T^n}{n!}\|Z_{\cdot} -Z_{\cdot}'\|^2_{2}.
\end{align*}}
For $n$ large enough, this defines a contraction; since the space is complete,
the sequence satisfies
{\begin{align*}
\Xi^{(n)}[Z_{\cdot}] = Z_{\cdot}^{n+1} \longrightarrow Z_{\cdot},
\end{align*}}
and the limit $Z_{\cdot}$ is a fixed point of $\Xi$, which conclude existence.

For uniqueness, let $X$ and $X'$ be two strong solutions to \eqref{weighted-mckean-vlasov} starting from the same initial condition $X_0$. Since $X$ and $X'$ are fixed points of the map $\Xi$,  we can apply the inequality established above:
{{\small}
\begin{align*}
    \|X_{\cdot} - X'_{\cdot}\|_{2}^2 &= \|\Xi^{(n)}[X] - \Xi^{(n)}[X']\|_{2}^2 \\
    &\leq \frac{K^{2n}T^n}{k!}  \|X_{\cdot} - X'_{\cdot}\|_{2}^2 \mathrm{d}s.
\end{align*}}
Letting $n \to \infty$, we find that $\|X_{\cdot} - X'_{\cdot}\|_{2}^2 = 0$, so
$X_{\cdot} = X'_{\cdot}$ $\mathrm{P}$-almost surely.
\end{proof}

We now show that the compact formulation \eqref{compactform} fits into the
framework of Lemma~\ref{wellposed-lemma}.

\begin{proof}[Proof of Proposition~\ref{prop:well}]
The statement follows directly from Lemma~\ref{wellposed-lemma}. Indeed, Equation~\ref{compactform}  is a particular case of  \eqref{weighted-mckean-vlasov} with the probability space $(I\times\Omega,\mathcal{V},\mathcal{Q}) = (\mathscr{A},\mathcal{G},\mathrm{P})$, driving Brownian motion $W_t = \mathbf{B}_t$, and kernel $k$ corresponding to
the graphon $w$. More precisely, for $(u,\omega)\in I\times\Omega$,
{{\small} \begin{align*}
    \mathbb{W}[\Phi_t](u,\omega) &= \mathbb{W}[\mathbb{E}_{\mathbb{Q}}[\Theta_t]](u,\omega)\\
    &= \int_{I}w(u,v)\mathbb{E}_{\mathbb{Q}}[\Theta_t]\lambda(\mathrm{d}v)\\
    &\stackrel{\text{Fubini}}{=} \int_{I\times\Omega}w(u,v)\Theta_t(v,\omega)\mathcal{Q}(\mathrm{d}v,\mathrm{d}\omega),
\end{align*}}
and
Lemma~\ref{wellposed-lemma} yields existence and uniqueness of its strong
solution.
\end{proof}
}

\subsection{Spectral representation and Monte Carlo approximation}

In this subsection we show how the compact formulation~\eqref{compactform}
can be analyzed via spectral representations, starting from the case of a separable kernel.

Consider now the interacting case with separable kernel $w(u,v) = \phi(u)\phi(v)$, where $\phi \in L^2_{I}(I)$:
 {{\small}
    \begin{align*}
        \mathrm{d}\theta_{t,u} &= b\Big(\theta_{t,u} , \int_{I}\phi(u)\phi(v)\mathbb{E}\big[\theta_{t,v}\big]\lambda(\mathrm{d}v)\Big)\mathrm{d}t + \sigma(\theta_{t,u})\mathrm{d}B_t^{u}.
    \end{align*}}

To lift this to the Fubini extension space, let $U$ be the random variable on $(I,\mathcal{I},\lambda)$ defined by the identity map $U(u) = u$, and view it on
$I\times\Omega$ as $U(u,\omega)=u$. Then the interaction term can be rewritten
using the expectation on the Fubini space $\mathcal{Q}$:
    $$
        \int_{I}\phi(u)\phi(v)\mathbb{E}_{\mathbb{Q}}\big[\theta_{t,v}\big]\lambda(\mathrm{d}v) = \phi(u) \int_{I} \phi(v)\mathbb{E}_{\mathbb{Q}}\big[\theta_{t,v}\big]\lambda(\mathrm{d}v) = \phi(u)\mathbb{E}_{\mathcal{Q}}[\phi(U)\Theta_t].
    $$
    Consequently, the single process $\Theta$ on the Fubini extension satisfies
    {{\small}
    \begin{align*}
        \mathrm{d}\Theta_t = b\big(\Theta_t , \; \phi(U)\mathbb{E}_{\mathcal{Q}}[\phi(U)\Theta_t]\big)\mathrm{d}t + \sigma(\Theta_{t})\mathrm{d}\mathbf{B}_t.
    \end{align*}}

The separable case is of particular interest as it facilitates numerical simulation via particle systems and Monte-Carlo methods, where indices are drawn uniformly at random. As we explain below, this intuition extends beyond
separable kernels to more general graphons through spectral decomposition.

Since the graphon $w \in L^2(I^2)$ is symmetric, by the spectral theorem, it admits the decomposition
$$w(u,v) = \sum_{i=1}^{\infty} \alpha_i \phi_i(u)\phi_i(v), $$
where $\{\alpha_i\}_{i \geq 1}$ are the (real) eigenvalues and $\{\phi_i\}_{i \ge 1}$ form an orthonormal basis of $L^2(I)$.

Let $w^d$ denote the truncated graphon $w^d(u,v) := \sum_{i=1}^{d} \alpha_i \,\phi_i(u)\phi_i(v)$, and let $\mathbb{W}^d$ be the associated integral operator, defined for any function $\eta \in L^2(I)$ by
$$ \mathbb{W}^d[\eta](u) := \int_I w^d(u,v)\eta(v)\lambda(\mathrm{d}v) = \sum_{i=1}^{d}\alpha_i \phi_i(u) \int_{I}\phi_i(v)\eta(v)\lambda(\mathrm{d}v). $$

We consider the truncated process $\Theta^d$ on the Fubini extension $(I\times\Omega, \mathcal{V}, \mathcal{Q})$ governed by
\begin{align*}
    \mathrm{d}\Theta_t^d &= b\big(\Theta_t^d, \mathbb{W}^d[\Phi_t^d]\big)\mathrm{d}t + \sigma(\Theta_t^d)\mathrm{d}\mathbf{B}_t,\\
     &= b\Bigl(\Theta_t^d,\sum_{i=1}^{d}\alpha_{i}\phi_{i}(U)\mathbb{E}_{\mathcal{Q}}\bigl[\phi_{i}(U)\Theta_t^d\bigr]\Bigr)\mathrm{d}t + \sigma(\Theta_t^d)\mathrm{d}\mathbf{B}_t,
\end{align*}
where $\Phi_t^d(u) := \mathbb{E}_{\mathbb{Q}}[\Theta_t^d(u,\cdot)]$ and
$U(u,\omega)=u$.

\begin{proposition}\label{convergence}
Let $T>0$. Under Assumption~\ref{assumption1} and assuming the initial condition satisfies $\mathbb{E}_{\mathcal{Q}}[|\Theta_0|^2] < \infty$ , there exists a constant $C_T > 0$ such that
$$ \sup_{t \in [0,T]} \|\Theta_t - \Theta_{t}^d\|_{\mathcal{H}}^2 \leq C_T \, \|w - w^d\|_{2}^2. $$
\end{proposition}

\begin{proof}
By It\^o's formula,
{\small}\begin{align*}
    \frac{\mathrm{d}}{\mathrm{d}t}\mathbb{E}_{\mathcal{Q}}\big[\big|\Theta_{t} - \Theta_{t}^{d}\big|^2 \big] &\leq 2\mathbb{E}_{\mathcal{Q}}\Big[\int_{0}^{t}\big|(\Theta_{s} - \Theta_{s}^d)(b(\Theta_s,\mathbb{W}[\Phi_s]) - b(\Theta_s^{d},\mathbb{W}^{d}[\Phi_s^{d}]) \big|\mathrm{d}s + \int_{0}^{t}|\sigma(\Theta_s) - \sigma(\Theta_{s}^d)|^2\mathrm{d}s\Big]\\
    &\stackrel{\text{Lipschitz}}\leq (2C +C^2 + 1)\mathbb{E}_{\mathcal{Q}}[|\Theta_t- \Theta_{t}^d|^2] + C^2\mathbb{E}_{\mathcal{Q}}\big[\big|\mathbb{W}[\Phi_t]- \mathbb{W}^{d}[\Phi_t^d]\big|^2\big]
\end{align*}
Moreover,
{\small} \begin{align*}
    \big|\mathbb{W}[\Phi_t] - \mathbb{W}^{d}[\Phi_t^d]\big|^2 \leq 2\big|(\mathbb{W} - \mathbb{W}^d)[\Phi_t]\big|^2 + 2\big|\mathbb{W}^d[\Phi_t  - \Phi_t^{d}]\big|^2.
\end{align*}
For the first term, using Fubini and Cauchy–Schwarz,
{\small} \begin{align*}
    \mathbb{E}_{\mathcal{Q}}\Big[\big|(\mathbb{W} - \mathbb{W}^d)[\Phi_t]\big|^2\Big] &= \int_{I\times\Omega}\Big|\int_{I}\big(w(u,v)- w^d(u,v)\big)\Phi_t(v)\lambda(\mathrm{d}v)\Big|^2\mathcal{Q}(\mathrm{d}u,\mathrm{d}\omega)\\
    &= \int_{I}\Big|\int_{I}\big(w(u,v)- w^d(u,v)\big)\Phi_t(v)\lambda(\mathrm{d}v)\Big|^2\lambda(\mathrm{d}u)\\
    &\stackrel{\text{C-S}}\leq \Big(\int_{I^2}|w(u,v) - w^d(u,v)|^2\lambda(\mathrm{d}v)\lambda(\mathrm{d}u)\Big)\Big(\int_{I}|\Phi_t(v)|^2\lambda(\mathrm{d}v)\Big)\\
    &\stackrel{\Phi_t(v) = \mathbb{E}_{\mathbb{Q}}[\Theta_t(v,\cdot)] }\leq \|w- w^d\|_{2}^{2}\|\Theta_t\|_{\mathcal{H}}^{2}.
\end{align*}
Since $\mathbb{E}_{\mathcal{Q}}[|\Theta_0|^2] < \infty$, standard SDE theory ensures that the second moment is bounded, i.e., $\|\Theta_t\|_{\mathcal{H}} \leq C$ for all $t\in[0,T]$. For the second term, using the operator norm of $\mathbb{W}^d$ and Jensen,
{\small} \begin{align*}
\mathbb{E}_{\mathcal{Q}}\Big[\big|\mathbb{W}^d[\Phi_t  - \Phi_t^{d}]\big|^2\Big] &\le \|\mathbb{W}\|_{op}^2 \|\Phi_t - \Phi_t^d\|_{\mathcal{H}_I}^2 \\
    &= \|\mathbb{W}\|_{op}^2 \int_I \big| \mathbb{E}_{\mathcal{Q}}[\Theta_t(u,\cdot) - \Theta_t^d(u,\cdot)] \big|^2 \lambda(\mathrm{d}u) \\
    &\le \|\mathbb{W}\|_{op}^2 \mathbb{E}_{\mathcal{Q}}\big[ |\Theta_t - \Theta_t^d|^2 \big].
\end{align*}
Substituting these estimates into the differential inequality, we obtain
$$ \frac{\mathrm{d}}{\mathrm{d}t} \mathbb{E}_{\mathcal{Q}}[|\Theta_t - \Theta_t^d|^2] \leq C \mathbb{E}_{\mathcal{Q}}[|\Theta_t - \Theta_t^d|^2] + C\|w - w^d\|_{2}^2. $$
By Grönwall's lemma, for all $t \in [0,T]$,
$$ \mathbb{E}_{\mathcal{Q}}[|\Theta_t - \Theta_t^d|^2] \le \left( C \|w - w^d\|_{2}^2 \cdot T \right) e^{CT}. $$
The result follows by setting $C_T = C T e^{CT}$.

\end{proof}

Thanks to the single equation formulation, Proposition \ref{convergence}, and standard propagation of chaos results, the graphon SDE system \eqref{system} can be simulated using a Monte Carlo particle system. This approach effectively avoids the need for a deterministic grid discretization of $[0,1]$.

\begin{remark}
One might wonder whether the particle method is justified given the random variable $U$. The independence between $U$ and $\mathbf{B}_t$ holds on the Fubini extension. Indeed, for any bounded measurable functions $f,g$,
{\small} \begin{align*}
\mathbb{E}_{\mathcal{Q}}[f(U)g(\mathbf{B}_t)] &= \int_{I}f(u)\mathbb{E}_{\mathbb{Q}}[g(B_t^u)]\lambda(\mathrm{d}u) \\
&= \mathbb{E}_{\mathbb{Q}}[g(B_t^1)] \int_{I}f(u)\lambda(\mathrm{d}u) 
= \mathbb{E}_{\mathcal{Q}}[f(U)]\mathbb{E}_{\mathcal{Q}}[g(\mathbf{B}_t)].
\end{align*}
This independence ensures that sampling indices $u_k$ (i.i.d.) uniformly provides an unbiased approximation of the interaction terms.
\end{remark}

%{\color{red}

\begin{algorithm}[H]\label{algoMC}
\caption{Monte Carlo Euler-Maruyama scheme for the truncated graphon SDE}

    \SetAlgoLined
    \DontPrintSemicolon
        \KwData{Number of particles $N$, truncation level $d$, horizon $T$, time step $\Delta t$}
    \KwResult{Trajectories $(\Theta_{n,k}^d)_{n=0,\dots,\lfloor T/\Delta t\rfloor;\,k=1,\dots,N}$}
    
    \BlankLine
    \tcp{1. Initialization}
    Sample i.i.d. indices $u_k \sim \mathcal{U}(0,1)$ for $k=1,\dots,N$\;
    Initialize particle states $\Theta_{0,k}^d$ for $k=1,\dots,N$\;
    
    \BlankLine
    \tcp{2. Time Loop (Euler-Maruyama)}
    \While{$n\Delta t < T$}{
        \For{$k \leftarrow 1$ \KwTo $N$}{
            \tcp{Compute interaction approximation}
            $ \hat{\mu}_{n,i} \leftarrow \frac{1}{N}\sum_{l=1}^{N}\phi_i(u_l)\Theta_{n,l}^d \quad \text{for } i=1\dots d$\;
            
            \tcp{Update step}
            Draw $Z \sim \mathcal{N}(0,1)$\;
            $ \text{Drift} \leftarrow b\Bigg(\Theta_{n,k}^d, \sum_{i=1}^{d}\alpha_{i}\phi_{i}(u_k) \hat{\mu}_{n,i}\Bigg)$\;
            
            $\Theta_{n+1,k}^d \leftarrow \Theta_{n,k}^d + \text{Drift}\cdot\Delta t  + \sigma(\Theta_{n,k}^{d})\sqrt{\Delta t}Z$\;
        }
        $n \leftarrow n+1$\;
    }
\end{algorithm}

Under suitable regularity assumptions, one expects convergence of Algorithm~\ref{algoMC}
as $N\to\infty$ and $d\to\infty$, in line with Proposition~\ref{convergence}
and standard propagation of chaos results.

\section{Girsanov transformation}\label{sec:girsanov}
{
Establishing the existence of a strong solution for certain SDEs can be difficult
outside the standard framework of hypotheses. However, one can often ensure the existence of a weak solution by identifying a probability space on which such a solution can be constructed. A classical method consists in starting from an equation on an auxiliary probability space where well-posedness is easily established. Applying Itô's formula then yields a formal solution to the  desired equation on this auxiliary space; however, the driving noise is generally not a Brownian motion. Through a change of probability measure, via Girsanov's theorem, this noise  can be turned into a Brownian motion.

In the following, we establish a version of Girsanov's theorem for a family of e.p.i. Brownian motions, which yields a standard Brownian motion on a new probability space. This approach is particularly well-suited for establishing weak solutions for graphon SDEs. The advantage of working with a collection of e.p.i. Brownian motions lies in avoiding the need to define martingales on the Fubini extension space; instead, the transformation is performed index by index.

}

\begin{theorem}[Girsanov theorem on a Fubini extension]\label{girsanovlemma}
Let $(I \times \Omega, \mathcal{V}, \mathcal{Q})
$ be  a Fubini extension probability space. Consider a collection of e.p.i. Brownian motions $\mathbf{B} = \big\{B^{u} : u \in I \big\}$, and  a real valued process  $(\Theta_t)_{t \geq 0} = \big\{(\theta_{t,u}^{})_{t \geq 0} : u \in I\big\}$, 
such that for all $(t,u) \in [0,T]\times I$, $\theta_{t,u} $ is $\sigma(B_s^{u}; s\leq t)$-mesurable.

Define the process $(\mathcal{E}_{t})_{t \geq 0} = \Big\{(\mathcal{E}_{t,u})_{t \geq 0} : u \in I\Big\}$ by 
{{\small} \begin{align*} \mathcal{E}_{t} := \exp\Big\{\int_{0}^{t}\Theta_{s}\mathrm{d}\mathbf{B}_s^{} - \frac{1}{2}\int_{0}^{t}\Theta_{s}^{2}\mathrm{d}s \Big\}.
\end{align*}}
If for all $u \in I$, the process $(\mathcal{E}_{t,u})_{t \geq 0}$ is a $\mathbb{Q}$-martingale, then  under the probability measure $\mathcal{P}$ defined by the density  $\mathcal{E}_T$ with respect to $\mathcal{Q}$, i.e.,
$\frac{\mathrm{d}\mathcal{P}}{\mathrm{d}\mathcal{Q}} = \mathcal{E}_T$, 
the process $\mathbf{W} = \big\{W^{u} : u \in I\big\}$ is a standard Brownian motion on the probability space $(I\times \Omega, \mathcal{V}, \mathcal{P}),$ where 
{{\small} \begin{align*}
\mathbf{W}_t &:= \mathbf{B}_t - \int_{0}^{t} \Theta_{s}\mathrm{d}s.
\end{align*}}
\end{theorem}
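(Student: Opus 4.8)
The plan is to reduce everything to the classical one-dimensional Girsanov theorem applied fiber by fiber, and then to transport the fiberwise conclusions to the extension by means of the Fubini property of $\mathcal{Q}$, exactly as in the proof of Theorem~\ref{thmwienerfubini}. First I would record that $\mathcal{P}$ is a genuine probability measure: since each $\mathcal{E}_{\cdot,u}$ is a $\mathbb{Q}$-martingale with $\mathcal{E}_{0,u}=1$, we have $\mathbb{E}_{\mathbb{Q}}[\mathcal{E}_{T,u}]=1$ for $\lambda$-a.e.\ $u$, and the Fubini property gives $\mathbb{E}_{\mathcal{Q}}[\mathcal{E}_T]=\int_I \mathbb{E}_{\mathbb{Q}}[\mathcal{E}_{T,u}]\lambda(\mathrm{d}u)=1$. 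For each $u\in I$ I then introduce the fiber measure $\mathbb{P}^u$ on $(\Omega,\mathcal{F})$ by $\mathrm{d}\mathbb{P}^u/\mathrm{d}\mathbb{Q}=\mathcal{E}_{T,u}$; the adaptedness hypothesis on $\theta_{\cdot,u}$ makes the stochastic integral defining $\mathcal{E}_{\cdot,u}$ meaningful, and the exponential is then the usual Dol\'eans--Dade exponential associated with $B^u$.

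The core fiberwise input is the classical Girsanov theorem: for each fixed $u$, under $\mathbb{P}^u$ the process $W^u_t=B^u_t-\int_0^t\theta_{s,u}\,\mathrm{d}s$ is a standard Brownian motion on $(\Omega,\mathcal{F})$. Writing $\mathbf{W}_t(u,\omega)=W^u_t(\omega)$ and noting that $\mathbf{W}$ is jointly measurable (time integration preserving joint measurability), I would verify the four requirements of Definition~\ref{standardwiener} for $\mathbf{W}$ under $\mathcal{P}$. Conditions (i) and (iv) are immediate: $\mathbf{W}_0=0$ and path continuity hold $\mathcal{Q}$-a.s., hence $\mathcal{P}$-a.s.\ since $\mathcal{P}\ll\mathcal{Q}$. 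For the marginal law, for bounded measurable $g$ I would insert the density and use Fubini,
\[
\mathbb{E}_{\mathcal{P}}[g(\mathbf{W}_t-\mathbf{W}_s)]
=\mathbb{E}_{\mathcal{Q}}[\mathcal{E}_T\,g(\mathbf{W}_t-\mathbf{W}_s)]
=\int_I \mathbb{E}_{\mathbb{P}^u}[g(W^u_t-W^u_s)]\,\lambda(\mathrm{d}u),
\]
and since each $W^u_t-W^u_s\sim\mathcal{N}(0,t-s)$ under $\mathbb{P}^u$, the inner expectation is the $\mathcal{N}(0,t-s)$-integral of $g$, independent of $u$, which yields condition (ii).

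The main step is the independence of increments, which I would prove by mimicking the corresponding computation in Theorem~\ref{thmwienerfubini}. Fixing $0\le t_0<\cdots<t_n$ and bounded measurable $\varphi_k$, I insert the density $\mathcal{E}_T$, apply the Fubini property of $\mathcal{Q}$ to pass to $\int_I \mathbb{E}_{\mathbb{P}^u}[\,\cdot\,]\lambda(\mathrm{d}u)$, use the fiberwise independence of increments under $\mathbb{P}^u$ to factor the integrand as $\prod_k\mathbb{E}_{\mathbb{P}^u}[\varphi_k(W^u_{t_k}-W^u_{t_{k-1}})]$, and finally exploit that each factor equals $\int_{\mathbb{R}}\varphi_k\,\mathrm{d}\mathcal{N}(0,t_k-t_{k-1})$, a quantity independent of $u$. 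Pulling this $u$-independent product out of the $\lambda$-integral (which has unit mass) and recognizing each factor as $\mathbb{E}_{\mathcal{P}}[\varphi_k(\mathbf{W}_{t_k}-\mathbf{W}_{t_{k-1}})]$ via the computation above gives the product formula, hence condition (iii).

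The main obstacle is conceptual rather than computational: one is tempted to simply invoke Theorem~\ref{thmwienerfubini}, but $(I\times\Omega,\mathcal{V},\mathcal{P})$ is in general \emph{not} a Fubini extension, because the fiber laws $\mathbb{P}^u$ vary with $u$ and do not coincide with their $\lambda$-average on $\Omega$. Consequently the Fubini property must be used in the form valid for $\mathcal{Q}$, applied to integrands of the type $\mathcal{E}_T\cdot F(\mathbf{W})$ with $F$ bounded; the only point requiring care is the integrability needed to justify each application of Fubini, which holds because $\mathcal{E}_T\in L^1(\mathcal{Q})$ and the test functions are bounded. A secondary technical point is ensuring the fiberwise objects ($\mathcal{E}_{T,u}$, the stochastic integral, the martingale property) are well defined for $\lambda$-a.e.\ $u$, which is guaranteed by joint $\mathcal{V}$-measurability together with the stated adaptedness.
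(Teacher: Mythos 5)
Your proposal is correct and takes essentially the same approach as the paper's own proof: both establish that $\mathcal{P}$ is a probability measure via the Fubini property and the fiberwise martingale property, introduce the fiber measures $\mathrm{d}\mathcal{P}^u=\mathcal{E}_{T,u}\,\mathrm{d}\mathbb{Q}$ on $(\Omega,\mathcal{F})$, invoke the classical Girsanov theorem fiberwise, and then transfer Gaussianity and independence of increments to $\mathcal{P}$ by inserting the density $\mathcal{E}_T$, applying Fubini under $\mathcal{Q}$, and using that the resulting fiberwise Gaussian integrals do not depend on $u$. Your write-up is in fact slightly more complete than the paper's (you explicitly check $\mathbf{W}_0=0$ and path continuity via $\mathcal{P}\ll\mathcal{Q}$, and treat $n$ increments rather than two), but the underlying argument is the same.
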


The above result can be applied, for instance, to construct a weak solution to
the graphon Belavkin equation in \cite[Theorem~2]{amini25gqfs}, which is a
graphon system driven by a family of e.p.i.\ operator-valued Brownian motions.

\begin{proof}[Proof of Theorem~\ref{girsanovlemma}]
The measure $\mathcal{P}$ is a probability measure. Indeed,
\begin{align*}
    \mathcal{P}(I\times\Omega ) &= \int_{I \times \Omega} \mathcal{E}_t(u,\omega)\mathcal{Q}(\mathrm{d}u,\mathrm{d}\omega)\\
    &= \int_{I\times \Omega}\mathcal{E}_{t,u}(\omega)\mathbb{Q}(\mathrm{d}\omega)\lambda(\mathrm{d}u)\\
    &= \int_{I}\underbrace{\mathbb{E}_{\mathbb{Q}}[\mathcal{E}_{t,u}]}_{1}\lambda(\mathrm{d}u)= 1.
    \end{align*}

The law of $\mathbf{W}$ is a Wiener measure.  
Fix $u \in I$ and set
$$
\mathcal{P}^{u}(\mathrm{d}\omega) := \mathcal{E}_{T,u}(\omega)\,\mathbb{Q}(\mathrm{d}\omega).
$$

By the standard Girsanov theorem, $(W_{t}^u)$ is a Wiener process under $\mathcal{P}^u$.

Then, for any measurable function $g : \mathbb{R}^n \to \mathbb{R}$,
{{\small} \begin{align*}
\mathbb{E}_{\mathcal{P}}\left[ g\left( \mathbf{W}_{t_1}, \dots, \mathbf{W}_{t_n} \right) \right]
&= \int_{I\times\Omega} g\left( \mathbf{W}_{t_1}(u,\omega), \dots, \mathbf{W}_{t_n}(u,\omega) \right) \mathcal{P}(\mathrm{d}u,\mathrm{d}\omega) \\
&= \int_{I\times\Omega} g\left( \mathbf{W}_{t_1}(u,\omega), \dots, \mathbf{W}_{t_n}(u,\omega) \right) 
\mathcal{E}_T(u,\omega)\mathcal{Q}(\mathrm{d}u,\mathrm{d}\omega) \\
&= \int_{I\times\Omega} g\left( W_{t_1}^u(\omega), \dots, W_{t_n}^u(\omega) \right) 
\mathcal{E}_{T,u}(\omega)\mathbb{Q}(\mathrm{d}\omega)\lambda(\mathrm{d}u) \\
&= \int_I \mathbb{E}_{\mathcal{P}^u}\left[ g\left( W_{t_1}^u, \dots, W_{t_n}^u \right) \right]\lambda(\mathrm{d}u) \\
&= \int_{\mathbb{R}^n} g(x_1,\dots,x_n) \nu_{t_1,\dots,t_n}(x_1,\dots,x_n)\mathrm{d}x_1\cdots\mathrm{d}x_n,
\end{align*}}
where $\nu_{t_1,\dots,t_n}$ is the joint Gaussian density of $(W_{t_1}^u, \dots, W_{t_n}^u)$ under $\mathcal{P}^u$.

For the independence of increments, consider  $0 \leq s<t$, and take two  measurable functions $g,h$. Then,
{{\small} \begin{align*}
\mathbb{E}_{\mathcal{P}}\left[g(\mathbf{W}_t-\mathbf{W}_s)h(\mathbf{W}_s)\right]
&= \int_I \mathbb{E}_{\mathcal{P}^u}\left[g(W_t^u-W_s^u)h(W_s^u)\right]\lambda(\mathrm{d}u) \\
&= \int_I \mathbb{E}_{\mathcal{P}^u}\left[h(W_t^u-W_s^u)\right]\mathbb{E}_{\mathcal{P}^u}\!\left[h(W_s^u)\right]\lambda(\mathrm{d}u) \\
&= \int_I \mathbb{E}_{\mathcal{P}^u}\left[h(W_t^u-W_s^u)\right]\mathbb{E}_{\mathcal{P}}\left[h(\mathbf{W}_s)\right]\lambda(\mathrm{d}u)\\
&= \mathbb{E}_{\mathcal{P}}\left[h(\mathbf{W}_t-\mathbf{W}_s)\right]\mathbb{E}_{\mathcal{P}}\left[h(\mathbf{W}_s)\right],
\end{align*}}
since under $\mathcal{P}^u$ the increments of $W_t^{u}$ are independent. 

\end{proof}

We end the paper with a few remarks concerning the structure and 
the behavior of the shifted process, and a convenient Novikov-type sufficient condition
ensuring applicability of Theorem~\ref{girsanovlemma}.

\begin{remark}
After the change of measure in Theorem~\ref{girsanovlemma}, there is no
guarantee that the resulting probability space
$(I\times\Omega,\mathcal{V},\mathcal{P})$ is itself a Fubini extension of a
product probability space for some index space $(I,\mathcal{I}',\lambda')$ and
sample space $(\Omega,\mathcal{F}',\mathbb{P})$. The construction ensures that
$\mathbf{W}$ is a Brownian motion on $(I\times\Omega,\mathcal{V},\mathcal{P})$,
but it does not, in general, identify $\mathcal{P}$ as an extension of $\lambda'\otimes\mathbb{P}$.
\end{remark}

\begin{remark}
Let $\mathbf{W}$ be as in Theorem~\ref{girsanovlemma}. Then:
\begin{itemize}
\item[(i)] For fixed $u\in I$, the processes $((W_t^{u})_{t \in [0,T]})_{u \in I}$ need not be a
Brownian motion under $\mathcal{P}$. For
any bounded measurable $g$,
{{\small} \begin{align*}
\mathbb{E}_{\mathcal{P}}[g(W_t^{u})] &= \int_{I\times\Omega}g(W_t^{u}(\omega))\mathcal{P}(\mathrm{d}v,\mathrm{d}\omega)\\
&= \int_{I\times\Omega}g(W_t^{u}(\omega))\mathcal{E}_T(v,\omega)\mathcal{Q}(\mathrm{d}v,\mathrm{d}\omega)\\
&= \int_{\Omega}g(W_t^{u}(\omega))\int_{I}\mathcal{E}_T(v,\omega)\lambda(\mathrm{d}v)\mathbb{Q}(\mathrm{d}\omega)\\
&\stackrel{\text{ELLN}}{=} \int_{\Omega}g(W_t^{u}(\omega))\int_{I\times\Omega}\mathcal{E}_T(v,\tilde{\omega})\mathbb{Q}(\mathrm{d}\tilde{\omega})\lambda(\mathrm{d}v)\mathbb{Q}(\mathrm{d}\omega)\\
&\stackrel{\text{Fubini}}{=} \int_{\Omega}g(W_t^{u}(\omega))\int_{I} \underbrace{\mathbb{E}_{\mathbb{Q}}\big[\mathcal{E}_{T,v}\big]}_{=1}\lambda(\mathrm{d}v)\mathbb{Q}(\mathrm{d}\omega)\\
&= \int_{\Omega}g(W_t^{u}(\omega))\mathbb{Q}(\mathrm{d}\omega) = \mathbb{E}_{\mathbb{Q}}[g(W_t^{u})].
\end{align*}}
In particular, for $g(x) = x$,
{{\small} 
\begin{align*}
\mathbb{E}_{\mathcal{P}}[W_t^{u}] = \mathbb{E}_{\mathbb{Q}}[B_t^{u} - \int_{0}^{t}\theta_{t,u}^{}\mathrm{d}s]
= -\int_{0}^{t}\mathbb{E}_{\mathbb{Q}}[\theta_{t,u}^{}]\mathrm{d}s \neq 0.
\end{align*}
}

\item[(ii)] Nevertheless, the family
$\big((W_t^{u})_{t\in[0,T]}\big)_{u\in I}$ remains pairwise independent under
$\mathcal{P}$. For bounded measurable $g,h$ and $u\neq v$,
{{\small} \begin{align*} \mathbb{E}_{\mathcal{P}}[g(W_t^{u})h(W_s^{v})] 
        &= \int_{I\times\Omega} g(W_t^{u}(\omega))h(W_s^{v}(\omega)) \,\mathcal{P}(\mathrm{d}x,\mathrm{d}\omega)\\
        &= \int_{I \times \Omega} g(W_t^{u}(\omega))h(W_s^{v}(\omega)) \,\mathcal{E}_{T}(x,\omega)\mathcal{Q}(\mathrm{d}x,\mathrm{d}\omega)\\
        &= \int_{\Omega}\Big[\int_{I}\mathcal{E}_{T}(x,\omega)\lambda(\mathrm{d}x)\Big] 
           g(W_t^{u}(\omega))h(W_s^{v}(\omega)) \,\mathbb{Q}(\mathrm{d}\omega)\\
        &\stackrel{\text{ELLN}}{=} \int_{\Omega}\Big[\int_{I\times\Omega}\mathcal{E}_{T}(x,\tilde{\omega})\mathcal{Q}(\mathrm{d}x,\mathrm{d}\tilde{\omega})\Big] 
           g(W_t^{u}(\omega))h(W_s^{v}(\omega)) \,\mathbb{Q}(\mathrm{d}\omega)\\
        &= \mathbb{E}_{\mathbb{Q}}[g(W_t^{u})h(W_s^{v})] = \mathbb{E}_{\mathbb{Q}}[g(W_t^{u})]\;\mathbb{E}_{\mathbb{Q}}[h(W_s^{v})]\\
        &= \mathbb{E}_{\mathcal{P}}[g(W_t^{u})]\;\mathbb{E}_{\mathcal{P}}[h(W_s^{v})].
\end{align*}}
Thus $W^u$ and $W^v$ remain independent under $\mathcal{P}$.

\item[(iii)] The processes $((B_t^{u})_{t \in [0,T]})_{u \in I}$ remain Brownian motions under the probability measure $\mathcal{P}$.

\end{itemize}
\end{remark}

%We end the paper by providing the Novikov condition, which gives a sufficient condition to apply the Girsanov transformation.

\begin{remark}[Novikov condition on a Fubini extension]
The stochastic exponential in Theorem~\ref{girsanovlemma} is well defined under
a natural Novikov-type integrability condition. 
Let $(I \times \Omega, \mathcal{V}, \mathcal{Q})$ be a Fubini extension probability space. Consider a collection of e.p.i. Brownian motions  $\mathbf{B} = \big\{B^{u} : u \in I \big\}$, and  a real valued  process  $(\Theta_t)_{t \geq 0} = \Big\{(\theta_{t,u}^{})_{t \geq 0} : u \in I\Big\}$, 
where $\Theta$ is e.p.i. and $ \Theta \in L^{2}_{\mathcal{C}_{\mathbb{R}}}(I\times\Omega ,\mathcal{Q})$. Furthermore, assume that for all $(t,u) \in I\times [0,T]$, $\theta_{t,u}^{} $ is $B_t^{u}$-measurable. If the condition
\begin{align*}
\mathbb{E}_{\mathcal{Q}}\Big[\exp\Big\{\int_0^T |\Theta_s|^2 \mathrm{d}s\Big\}\Big]
=\int_I \mathbb{E}_{\mathbb{Q}}\Big[\exp\Big\{\int_0^T (\theta_{t,u}^{})^2 \mathrm{d}s\Big\}\Big]\lambda(\mathrm{d}u)<\infty
\end{align*}
is satisfied, then the process $(\mathcal{E}_{t})_{t \geq 0} = \Big\{(\mathcal{E}_{t,u})_{t \geq 0} : u \in I\Big\}$ with 
$$\mathcal{E}_{t} = \exp\Big\{\int_{0}^{t}\Theta_{s}\mathrm{d}\mathbf{B}_s^{} - \frac{1}{2}\int_{0}^{t}\Theta_{s}^{2}\mathrm{d}s \Big\}$$
forms a collection $\Big\{(\mathcal{E}_{t,u})_{t \geq 0} : u \in I\Big\}$ of e.p.i. $\mathbb{Q}$-martingales on the Fubini extension. The proof can be covered by the classical Novikov theorem, see e.g., \cite[Section 8.1]{revuz13continuous}.

\end{remark}

}

\paragraph*{Funding.}
This work was supported by the ANR projects Q-COAST (ANR-19-CE48-0003) and IGNITION (ANR-21-CE47-0015).

%Print bibliography 
%{\footnotesize 
\bibliographystyle{plain}
\bibliography{biblio}
%}

\end{document}